\theoremstyle{plain}
\newtheorem{thm}{Theorem}[section]
\newtheorem{cor}[thm]{Corollary}
\newtheorem{rem}[thm]{Remark}
\newtheorem{ques}[thm]{Question}
\newtheorem{conj}[thm]{Conjecture}
\newtheorem{exam}[thm]{Example}
\def\cal{\mathcal}
\def\bbb{\mathbb}
\def\op{\operatorname}
\renewcommand{\phi}{\varphi}
\newcommand{\R}{\bbb{R}}
\newcommand{\N}{\bbb{N}}
\newcommand{\Z}{\bbb{Z}}
\newcommand{\Q}{\bbb{Q}}
\newcommand{\C}{\bbb{C}}
\begin{document}

\title[Rational points in arithmetic progression on $y^2=x^n+k$]{Rational points in arithmetic progression \\on $y^2=x^n+k$}
\author{Maciej Ulas}

\keywords{arithmetic progressions, elliptic curves, rational points}
\subjclass[2000]{11G05}

\begin{abstract}
Let $C$ be a hyperelliptic curve given by the equation $y^2=f(x)$,
where $f\in\Z[x]$ and $f$ hasn't multiple roots. We say that points
$P_{i}=(x_{i},\;y_{i})\in C(\Q)$ for $i=1,2,\ldots,\;n$ are in
arithmetic progression if the numbers $x_{i}$ for $i=1,2,\ldots,\;n$
are in arithmetic progression.

In this paper we show that there exists a polynomial $k\in\Z[t]$
with such a property that on the elliptic curve $\cal{E}:
y^2=x^3+k(t)$ (defined over the field $\Q(t)$) we can find four
points in arithmetic progression which are independent in the group
of all $\Q(t)$-rational points on the curve $\cal{E}$. In particular
this result generalizes some earlier results of Lee and V\'{e}lez
from \cite{LeeVel}. We also show that if $n\in\N$ is odd then there
are infinitely many $k$'s with such a property that on the curves
$y^2=x^n+k$ there are four rational points in arithmetic
progressions. In the case when $n$ is even we can find infinitely
many $k$'s such that on the curves $y^2=x^n+k$ there are six
rational points in arithmetic progression.
\end{abstract}

\maketitle

\section{Introduction}\label{sec1}

Many problems in number theory are equivalent to the problem of
solution of certain equation or system of equations in integers or
in rational numbers. Problems of this type are called {\it
diophantine problems}. In the case when we are able to show that our
problem has infinitely many solutions a natural question arises
whether it is possible to show the existence of rational parametric
solutions i.e. solutions in polynomials or in rational functions. In
general this kind of problems are difficult and we don't have any
general theory which can give even partially answer to such kind
questions. For example, N. Elkies showed in \cite{Elk} that the set
of rational points on the surface $x^4+y^4+z^4=t^4$ is dense in the
set of all real points on this surface. However we still haven't
known if this equation has rational parametric solutions.

In this paper we meet with the problem of similar type. Our question
is related to the construction of integers $k$ with such a property
that on the elliptic curve $E_{k}:\; y^2=x^3+k$ there are four
rational points in arithmetic progression. Let us recall that if the
curve $C:\;f(x,y)=0$ is defined over $\Q$ and we have the rational
points $P_{i}=(x_{i},\;y_{i})$ on $C$ then we say that $P_{i}$'s are
in arithmetic progression if the numbers $x_{i}$ for
$i=1,2,\ldots,\;n$ are in arithmetic progression.

In connection with this problem Lee and V\'{e}lez in \cite{LeeVel}
showed that  each rational point on the elliptic curve
\begin{center}
$E:\;y^2=x^3-39x-173$
\end{center}
gives an integer $k$ with such a property that on the elliptic curve
$E_{k}:\;y^2=x^3+k$ there are four rational points in arithmetic
progression. Due to the fact that the set $E(\Q)$ of all rational
points on $E$ is generated by the point $(11,27)$ of infinite order
we get infinitely many $k$'s which satisfy demanded conditions.

Now, it should be noted that S. P. Mohanty stated the following

\begin{conj}[S. P. Mohanty \cite{Moh}]\label{MohantyConj}
Let $k\in\Z$ and suppose that the rational points
$P_{i}=(x_{i},y_{i})$ for $i=1,\;\ldots,\;n$ are in arithmetic
progression on the elliptic curve $y^2=x^3+k$. Then $n\leq 4$.
\end{conj}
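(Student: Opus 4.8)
The plan is to argue by contradiction: assume there exist $k\in\Z$ and five rational points $P_0,\dots,P_4$ on $E_k:\,y^2=x^3+k$ whose $x$-coordinates form an arithmetic progression $x_i=x_0+id$ with $d\neq0$ (it clearly suffices to rule out the case $n=5$), and derive a contradiction. The first reduction is to eliminate $k$ and $x_0$ by finite differences. Writing $f(x)=x^3+k$, the third forward difference of $f$ with step $d$ is the constant $6d^3$ and the fourth vanishes identically, independently of $x_0$ and $k$. Applied to the values $y_i^2=f(x_0+id)$ this yields
\begin{gather*}
y_3^2-3y_2^2+3y_1^2-y_0^2=6d^3,\qquad y_4^2-3y_3^2+3y_2^2-y_1^2=6d^3,\\
y_0^2-4y_1^2+6y_2^2-4y_3^2+y_4^2=0,
\end{gather*}
the last identity being a consequence of the first two but the cleanest $k$- and $x_0$-free relation. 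Supplementing these with, say, $y_1^2-y_0^2=3x_0^2d+3x_0d^2+d^3$, which recovers $x_0$ from the $y_i$ and $d$, one obtains an explicit affine variety $V$ in coordinates $(x_0,d,y_0,\dots,y_4)$ whose $\Q$-points with $d\neq0$ are exactly the five-term progressions sought. Counting equations, $V$ is a threefold (and, for each fixed $k\in\Z$, the incidence variety $\{y_i^2=(x_0+id)^3+k\}$ is a surface). After quotienting by the natural scaling $(x,y,d,k)\mapsto(t^2x,t^3y,t^2d,t^6k)$ one may moreover assume $d$ is squarefree.

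The task is then to show that this (sur)variety has no $\Q$-points outside the trivial locus $d=0$. I would proceed on two fronts. First, search for a local obstruction: reduce the defining system $y_i^2\equiv(x_0+id)^3+k\pmod{p^m}$ modulo small primes and prime powers and test solvability with $d\not\equiv0$; a contradiction at a single $p$ would settle the conjecture immediately and should be checked before anything else. Failing that, one fibres $V$ down to curves — e.g.\ over the line parametrised by $d$, and then over one further invariant of the progression such as $y_2^2$ — so that the generic fibre $C$ is the explicit intersection of two or three conics and cubics; one computes its genus, hoping it is $\ge2$, invokes Faltings for finiteness, and then tries to make this effective fibrewise via Chabauty--Coleman together with a Mordell--Weil sieve on $\mathrm{Jac}(C)$, concluding that every fibre contributes only points with $d=0$.

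The step I expect to be the genuine obstacle is precisely this last one. The variety in play is positive-dimensional, so there is no reason for a single congruence to obstruct it; and establishing finiteness fibrewise is not enough — one needs \emph{emptiness} uniformly across an infinite family of curves of (presumably) positive Mordell--Weil rank, which is exactly the kind of uniform Chabauty input that is not available by current methods, and the ambient surface is plausibly of general type, so even Bombieri--Lang would only give non-density of $\Q$-points rather than the sharp bound $n\le4$. For this reason I would regard a complete unconditional proof as out of reach here, and I would settle for (i) a conditional statement, e.g.\ under an explicit rank bound on the relevant Jacobians, and (ii) a decisive numerical check: an exhaustive search over progressions of bounded height on all $E_k$ with $|k|$ and $H(x_0),H(d)$ small, both to corroborate the conjecture and, crucially, to detect a counterexample before investing in the heavy machinery.
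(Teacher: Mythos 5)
There is a fundamental mismatch here: the statement you were asked about is a \emph{conjecture} (due to Mohanty), not a theorem, and the paper contains no proof of it --- on the contrary, the author explicitly states the belief that the conjecture is \emph{false}, and the whole point of the paper is to manufacture evidence in that direction (infinitely many $k$ of AP4 type, a polynomial family $y^2=x^3+k(t)$ with four independent $\Q(t)$-points in arithmetic progression, and in Section \ref{sec4} a sextic threefold $\cal{T}$ whose nontrivial rational points would produce five-term progressions, i.e.\ counterexamples). Your proposal is likewise not a proof: after a correct but routine reduction by finite differences (the relations $y_3^2-3y_2^2+3y_1^2-y_0^2=6d^3$ etc.\ are fine), you yourself concede that the decisive step --- proving \emph{emptiness} of the nontrivial locus uniformly over an infinite family of curves, rather than mere fibrewise finiteness via Faltings or Chabauty--Coleman --- is not achievable by current methods. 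That concession is exactly the gap: nothing in the sketch closes it, the local-obstruction route is hopeless for a positive-dimensional variety that visibly has points with $d=0$ and no congruence obstruction with $d\neq0$, and even Bombieri--Lang would only give non-density, not the sharp bound $n\le 4$.

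It is also worth noting that your incidence variety is essentially the same object the paper reaches from the opposite direction: eliminating $k$ and the progression data leads to the threefold $\cal{T}$ of Section \ref{sec4}, and the paper's numerical search there (all $|p|,|q|,|r|,|s|,|t|\le 10^2$) found only trivial points. This is consistent with the conjecture but proves nothing, and the paper's constructions (e.g.\ Theorem \ref{thm2}, which produces four independent points in progression over $\Q(t)$ and reduces the existence of a fifth to finding rational points on explicit genus-$3$ curves) suggest that, if anything, the conjecture is delicate and possibly false. So the honest assessment is: the statement is open, the paper does not prove it, and your proposal, while correctly diagnosing why a proof is out of reach, should not be presented as an approach that could be completed; at best it reproduces the standard reduction to a surface/threefold on which the real difficulty --- uniform control of rational points --- remains untouched.
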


We think that the above conjecture is not true. It is clear that in
order to find counterexample we should have plenty of $k$'s for
which on the curve $E_{k}:\;y^2=x^3+k$ we have four points in
arithmetic progression. Integer number which satisfy this condition
will be called {\it number of AP4 type}.

Main aim of this paper is to construct parametric families of
numbers of AP4 type. We should note that the method employed in
\cite{LeeVel} cannot be used in order to find families of these
kinds.

In Section \ref{sec2} we show that each rational point on the
surface
\begin{equation*}
\cal{S}_{1}:\;(p^2-3q^2+3r^2-s^2)(11p^2-18q^2+9r^2-2s^2)=3(2p^2-5q^2+4r^2-s^2)^{2}
\end{equation*}
give us an integer $k$ of AP4 type. In particular we prove that on
the $\cal{S}_{1}$ there are infinitely many rational curves. Using
this result we show that there exists a polynomial $k\in\Z[t]$ with
such a property that on the corresponding elliptic curve
$\cal{E}:\;y^2=x^3+k(t)$ defined over $\Q(t)$ there are four
$\Q(t)$-rational points and these points are independent in the set
$\cal{E}(\Q(t))$. Using this result we deduce that the set of
rational points on the surface $\cal{S}_{1}$ is dense in the set of
all real points on $\cal{S}_{1}$ in the Euclidean topology.

In Section \ref{sec3} we consider natural generalizations of the
problem of construction of rational points in arithmetic
progressions on hyperelliptic curves of the form $y^2=x^n+k$.

In Section \ref{sec4} we give a special sextic hypersurface which is
connected with the problem of construction of integer $k$ with sucha
property that on the curve $y^3=x^3+k$ there are five points in
arithmetic progression.

\section{Rational points on $\cal{S}_{1}$}\label{sec2}

In this section we are interested in the construction of integer
numbers of AP4 type. Let $f(x)=ax^3+bx^2+cx+d\in\Q[a,b,c,d][x]$ and
consider the curve $C:\;y^2=f(x)$. Using now change of coordinates
\begin{equation*}
(x,\;y)=\Big(\frac{X-3b}{9a},\;\frac{Y}{27a}\Big)\;\; \mbox{with
inverse} \quad (X,\;Y)=\Big(9ax+3b,\;27ay\Big),
\end{equation*}
we can see that the curve $C$ birationally equivalent to the curve
\begin{equation*}
E:\;Y^2=X^3+27(3ac-b^2)X+27(27a^{2}d-9abc+2b^3).
\end{equation*}

Let $p,q,r,s$ be rational parameters and let us put
\begin{equation}\label{R1}
\begin{array}{lll}
  a=-(p^2-3q^2+3r^2-s^2)/6, & &b=(2p^2-5q^2+4r^2-s^2)/2, \\
  c=-(11p^2-18q^2+9r^2-2s^2)/6, & &d=p^2.
\end{array}
\end{equation}
For $a,b,c,d$ defined in this way we have
\begin{equation*}
f(0)=p^2,\quad f(1)=q^2,\quad f(2)=r^2,\quad f(3)=s^2.
\end{equation*}
We can see that in order to prove that the set of integers of AP4
type contains image of certain polynomial we must consider the
surface $3ac=b^2$, where $a,\;b,\;c,\;d$ are define by the
(\ref{R1}).

Indeed, the points $(0,\;p),\;(1,\;q),\;(2,\;r),\;(3,\;s)$ are in
arithmetic progression on the curve $C$, and due to the fact that
our mapping from $C$ to $E$ given by (\ref{R2}) is affine we deduce
that the images of these points will be in arithmetic progression on
the curve $E$. So we consider the surface in $\mathbb{P}^{3}$ given
by the equation
\begin{equation}\label{R2}
\cal{S}_{1}:\;(p^2-3q^2+3r^2-s^2)(11p^2-18q^2+9r^2-2s^2)=3(2p^2-5q^2+4r^2-s^2)^{2}.
\end{equation}
It is easy to see that the surface $S_{1}$ is singular and that the
set $(\pm 1,\;\pm 1,\;\pm 1,\;\pm 1)$ is the set of all singular
points (the sign $+$ and $-$ are independent of each other). The
surface $S_{1}$ has sixteen singular points which is maximal number
of singular points for surfaces of degree four. Surfaces with this
property are known under the name of {\it Kummer surfaces}.

We will show the following

\begin{thm}\label{thm1}
The set of rational curve on the surface $\cal{S}_{1}$ is infinite.
In particular the set $\cal{S}_{1}(\Q)$ is dense in the set of all
real points $\cal{S}_{1}(\R)$ in the Euclidean topology.
\end{thm}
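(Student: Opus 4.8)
The plan is to produce an explicit infinite family of rational curves lying on $\cal{S}_1$ and to argue that these curves sweep out a Euclidean-dense subset of $\cal{S}_1(\R)$. First I would exploit the sixteen singular points: since $\cal{S}_1$ is a Kummer (quartic) surface with a singular point, a standard trick is to pick one singular point, say $(1,1,1,1)$, and intersect $\cal{S}_1$ with the pencil of lines (or planes) through it. Because the surface is quartic with a \emph{double} point there, such a line meets $\cal{S}_1$ at that point with multiplicity $2$ and hence in only two further points, which are defined over a quadratic extension of the parameter field; but if I instead slice with a suitable pencil of \emph{hyperplanes} (two-parameter families of planes through $(1,1,1,1)$), the residual intersection is a plane cubic passing through the image of the singular point, and one can try to find a rational point on it to force rationality. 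Alternatively — and this is probably cleaner — I would look for curves by setting up a linear or low-degree substitution $p,q,r,s$ in terms of one parameter $t$ (plus auxiliary constants) that identically satisfies (\ref{R2}), using the fact that the defining equation is a single quadratic relation among the squares $p^2,q^2,r^2,s^2$, which gives a conic in those square-variables.

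Concretely, I would reparametrize by $P=p^2,Q=q^2,R=r^2,S=s^2$ so that (\ref{R2}) becomes a single quadric $\cal{Q}(P,Q,R,S)=0$ in $\mathbb{P}^3$; this quadric is visibly nondegenerate enough to contain lots of lines (a smooth quadric surface in $\mathbb{P}^3$ carries two rulings). Taking a line $\ell$ on the quadric $\{\cal{Q}=0\}$ that passes through the point $(1,1,1,1)$ (which lies on it, corresponding to the singular locus), I get a one-parameter family $(P(u),Q(u),R(u),S(u))$ of points on the quadric with $P,Q,R,S$ linear in $u$. Pulling this back to $\cal{S}_1$ requires $P(u),Q(u),R(u),S(u)$ to all be squares; imposing that each is a square of a linear (or rational) function of a new parameter is exactly the kind of condition that, generically, fails, but near the base point $(1,1,1,1)$ — where all four coordinates are $1$, a perfect square — one can hope to solve it. The main device to get \emph{infinitely many} curves, rather than just one, is to repeat this construction for different lines in the two rulings, or for lines through the various singular points, and to observe these give genuinely distinct (non-proportional) rational curves on $\cal{S}_1$.

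The density statement I would then deduce as follows. Each rational curve $\Gamma\subset\cal{S}_1$ that we construct is itself a rational variety, so $\Gamma(\Q)$ is Zariski-dense in $\Gamma$, and since $\Gamma$ is a real curve with a rational point, $\Gamma(\Q)$ is Euclidean-dense in the relevant component of $\Gamma(\R)$. As the chosen point $(1,1,1,1)$ (and its sign variants) can be approached from curves in both rulings, the union of these curves covers a Euclidean-open neighbourhood inside $\cal{S}_1(\R)$ of a smooth real point; combined with the fact that $\cal{S}_1$ is irreducible and the rational curves we build move in at least a two-dimensional family (so their union is Zariski-dense), a standard argument (a dense-in-a-neighbourhood-of-a-smooth-point plus irreducibility plus the real-analytic structure) upgrades this to Euclidean density of $\cal{S}_1(\Q)$ in all of $\cal{S}_1(\R)$.

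The hard part will be the second step: verifying that the lines on the quadric $\{\cal{Q}=0\}$ can be chosen so that the pullback condition "$P,Q,R,S$ are simultaneously squares" has a one-parameter rational solution, and then checking that varying the construction really yields \emph{infinitely many distinct} rational curves rather than finitely many masquerading as a family under reparametrization. Managing the sign choices among the sixteen singular points and making sure the resulting curves are not all images of one another under the obvious sign symmetries $(p,q,r,s)\mapsto(\pm p,\pm q,\pm r,\pm s)$ is the bookkeeping obstacle, but the genuinely substantive point is exhibiting even one rational curve through a smooth point, which I expect the author handles by an explicit clever substitution rather than the general quadric-ruling heuristic sketched here.
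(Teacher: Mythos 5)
Your plan has two genuine gaps, and the first is fatal to the route you chose. You never actually produce even one rational curve on $\cal{S}_{1}$, and the mechanism you propose for doing so would not work as stated: passing to the quadric in $(P,Q,R,S)=(p^2,q^2,r^2,s^2)$ and taking a line of a ruling through $(1,1,1,1)$ forces you to make the four linear forms $P(u),Q(u),R(u),S(u)$ simultaneously squares, and the curve $p^2=P(u),\;q^2=Q(u),\;r^2=R(u),\;s^2=S(u)$ lying over such a line is in general of genus greater than one, so by Faltings it has only finitely many rational points and cannot furnish a rational curve; ``hoping to solve it near the base point'' is not an argument. The paper's construction is structurally different: it splits the quartic relation (\ref{R2}) into the pair of quadratic equations (\ref{R3}) depending on an auxiliary parameter $t$, so that for each $t$ one gets an intersection of two quadrics in the original variables $p,q,r,s$ containing the rational point $(1,1,1,1)$, hence a genus-one curve with a point, i.e.\ an elliptic curve $\cal{E}_{1}$ over $\Q(t)$. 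It then exhibits an explicit section $P$ and proves it has infinite order (specializing at $t=1$ and applying Nagell--Lutz to $3P_{1}$), so the multiples $mP$, $m=1,2,\ldots$, give infinitely many distinct rational curves on $\cal{E}_{1}$ and hence on $\cal{S}_{1}$. This is exactly the ``substantive point'' you defer to the author, and without it your proof does not start.

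The density step is also gapped. From ``each constructed rational curve carries a Euclidean-dense set of rational points in (part of) its real locus'' together with Zariski density of the union of the curves, you cannot conclude Euclidean density in $\cal{S}_{1}(\R)$: a countable union of curves is dense only if the curves themselves accumulate everywhere in the Euclidean topology, which is precisely what needs proof; moreover rational points on a real curve are dense only in the connected components actually met by the parametrization. Your proposed ``standard upgrade'' from density near one smooth point via irreducibility and real-analytic structure is not a valid inference. The paper instead exploits the elliptic fibration: Silverman's specialization theorem gives positive rank of $\cal{E}_{1,t}(\Q)$ for all but finitely many $t\in\Q$, and Hurwitz's theorem applies because the curve has full rational $2$-torsion and the inequalities $X_{3}(t)<X_{P}(t)<X_{1}(t)<X_{2}(t)$ show the infinite-order point lies on the bounded real branch; hence $\cal{E}_{1,t}(\Q)$ is dense in $\cal{E}_{1,t}(\R)$ for those $t$, and since these fibers sweep out the surface as $t$ runs over the (dense) rational values, one gets density of $\cal{S}_{1}(\Q)$ in $\cal{S}_{1}(\R)$. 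Some argument of this fiberwise type (or a substitute for it) is indispensable; as written, your outline asserts the conclusion rather than proving it.
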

\begin{proof}
In order to show that on the surface $\cal{S}_{1}$ there are
infinitely many rational curves defined over $\Q$ let us consider
the following system of equations
\begin{equation*}
\begin{cases}
t(p^2-3q^2+3r^2-s^2)=(2p^2-5q^2+4r^2-s^2),\\
(11p^2-18q^2+9r^2-2s^2)=3t(2p^2-5q^2+4r^2-s^2),
\end{cases}
\end{equation*}
or equivalently
\begin{equation}\label{R3}
\begin{cases}
(3t^2+3t+1)r^2=-(3t^2+9t+7)p^2+2(3t^2+6t+4)q^2,\\
(3t^2+3t+1)s^2=-2(3t^2+12t+13)p^2+9(t^2+3t+3)q^2,
\end{cases}
\end{equation}
where $t$ is an indeterminate parameter. It is easy to see that each
rational solution of the system (\ref{R3}) leads us to the rational
point on the surface $\cal{S}_{1}$. From geometric point of view the
system (\ref{R3}) as intersection of two quadratic surfaces with
rational points $(p,q,r,s)=(1,1,1,1)$ is birationally equivalent
with an elliptic curve defined over the field $\Q(t)$. Now, we show
the construction of appropriate mapping.

Using standard substitution $(p,\;q,\;r)=(u+r,\;v+r,\;r)$ we can
parametrize all rational points on the first equation of the system
(\ref{R3}) in the following way
\begin{equation}\label{R4}
\begin{cases}
p=(3t^2+9t+7)u^2-4(3t^2+6t+4)uv+2(3t^2+6t+4)v^2,\\
q=(3t^2+9t+7)u^2-2(3t^2+9t+7)uv+2(3t^2+6t+4)v^2,\\
r=(3t^2+9t+7)u^2-2(3t^2+9t+4)v^2.
\end{cases}
\end{equation}
Without lose of generality we can assume that $u=1$ and let us
substitute the parametrization we have obtained into the second
equation in the system (\ref{R3}). We get the curve defined over the
field $\Q(t)$ with the equation
\begin{align*}
\cal{C}_{1}:\;s^2=\;&4(3t^2+6t+4)^2v^{4}+8(3t^2+6t+4)(3t^2+15t+19)v^{3}+\\
    &-4(36t^4+243t^3+618t^2+702t+313)v^{2}+\\
    &+4(3t^2+9t+7)(3t^2+15t+19)v+(3t^2+9t+7)^2
\end{align*}
with $\Q(t)$-rational point $Q=(0,\;3t^2+9t+7)$. Let us define the
following quantities
\begin{align*}
&C(t)=(99t^4+756t^3+2253t^2+3114t+1709)/3,\\
&D(t)=36(t^2+3t+3)(3t^2+12t+13)(3t^2+15t+19).
\end{align*}
If we treat $Q$ as a point at infinity on the curve $\cal{C}_{1}$
and use the method described in \cite[page 77]{Mor} we conclude that
$\cal{C}_{1}$ is birationally equivalent over $\Q(t)$ to the
elliptic curve with the Weierstrass equation
\begin{equation*}
\cal{E}_{1}:\;Y^2=X^3+f(t^2+3t)X+g(t^2+3t),
\end{equation*}
where
\begin{align*}
&f(u)=-27(1053u^4+10152u^3+37530u^2+62616u+39673),\\
&g(u)=54(9u^2+60u+85)(45u^2+192u+227)(63u^2+312u+397).
\end{align*}
The mapping $\phi:\;\cal{C}_{1}\ni (v,s) \mapsto (X,Y)\in
\cal{E}_{1}$ is given by
\begin{align*}
&v=\frac{2Y-27d(t)-6(3t^2+15t+19)(X-9c(t))}{12(X-9c(t))(3t^2+6t+4)},\\
&s=\frac{-(2Y-27d(t))^2+4(2X+9c(t))(X-9c(t))^2}{72(3t^2+6t+4)(X-9c(t))^2}.
\end{align*}

The discriminant of $\cal{E}_{1}$ is
\begin{align*}
2^{8}3^{16}&(3+3t+t^2)^2(1+3t+3t^2)^2(4+6t+3t^2)^2\times\\
&(7+9t+3t^2)^2(13+12t+3t^2)^2(19+15t+3t^2)^2,
\end{align*}
so that $\cal{E}_{1,t}$ is singular for the values $t\in \cal{A}$,
where
\begin{equation*}
\cal{A}=\{\frac{-15\pm\sqrt{-3}}{6},\frac{-9\pm\sqrt{-3}}{6},\frac{-3\pm\sqrt{-3}}{6},\frac{-6\pm\sqrt{-3}}{3},\frac{-3\pm\sqrt{-3}}{3},\frac{-3\pm\sqrt{-3}}{2}\}.
\end{equation*}
For $t\in\cal{A}$, the decomposition is of Kodaira classification
type $I_{2}$. Let us note that $\cal{E}_{1}$ is a K3-surface. As we
know, the N\'{e}ron-Severi group over $\C$, denoted by
$\op{NS}(\cal{E}_{1})=\op{NS}(\cal{E}_{1},\C)$, is a finitely
generated $\Z$-module. From Shioda \cite{Shi}, we have
\begin{equation*}
\op{rank}\op{NS}(\cal{E}_{1},\C)=\op{rank}\cal{E}_{1}(\C(t))+2+\sum_{\nu}(m_{\nu}-1),
\end{equation*}
where the sum ranges over all singular fibers of the pencil
$\cal{E}_{1,t}$, with $m_{\nu}$ the number of irreducible components
of the fiber. Here, we have
\begin{equation*}
\op{rank}\op{NS}(\cal{E}_{1},\C)=\op{rank}\cal{E}_{1}(\C(t))+2+6\cdot2\cdot(2-1).
\end{equation*}
Since the rank of the N\'{e}ron-Severi group of a K3-surface cannot
exceed 20, then the $\op{rank}\cal{E}_{1}(\C(t))\leq 6$. Although it
would be interesting to know the rank of $\cal{E}_{1}(\C(t))$
precisely, we are rather interested with the rank of
$\cal{E}_{1}(\Q(t))$. We will show that
$\op{rank}\cal{E}_{1}(\Q(t))\geq 1$.

Let us note that the curve $\cal{E}_{1}$ has three points of order
two
\begin{align*}
&T_{1}=(6(9(t^2+3t)^2+60(t^2+3t)+85),\;0),\\
&T_{2}=(3(45(t^2+3t)^2+192(t^2+3t)+227),\;0),\\
&T_{3}=(-3(63(t^2+3t)^2+312(t^2+3t)+397),\;0).
\end{align*}

On the curve $\cal{E}_{1}$ we have also the point $P=(X_{P},Y_{P})$,
where
\begin{align*}
&X_{P}=-3(9t^4+108t^3+357t^2+468t+229),\\
&Y_{P}=54(3t^2+6t+4)(3t^2+9t+7)(3t^2+15t+19)).
\end{align*}
It is easy to see that the point $P$ is of infinite order on the
curve $\cal{E}_{1}$. In order to prove this let us consider the
curve $\cal{E}_{1,\;1}$ which is specialization of the curve
$\cal{E}_{1}$ at $t=1$. We have $\cal{E}_{1,\;1}:
Y^2=X^3-48867651X+115230640770$. On the curve $\cal{E}_{1,\;1}$ we
have the point $P_{1}=(-3513,\;493506)$ which the specialization of
the point $P$ at $t=1$. Now, let us note that
\begin{equation*}
3P_{1}=\Big(\frac{3953140143}{1408969},\;
\frac{24183154596042}{1672446203}\Big).
\end{equation*}
As we know, the points of finite order on the elliptic curve
$y^2=x^3+ax+b,\;a,\;b\in\Z$ have integer coordinates \cite[page
177]{Sil}, while $3P_{1}$ is not integral point; therefore, $P_{1}$
is not a point of finite order on $\cal{E}_{1,\;1}$, which means
that $P$ is not a point of finite order on $\cal{E}_{1}$. Therefore,
$\cal{E}_{1}$ is a curve of positive rank. In particular we have
proved that $1\leq \op{rank}\cal{E}_{1}(\Q(t))\leq
\op{rank}\cal{E}_{1}(\C(t))\leq 6$. However, we have not identified
the $\Q(t)$ rank of $\cal{E}_{1}$ exactly. Numerical calculations
give that for many specializations of $\cal{E}_{1}$ at $t\in\Q$ we
have that the rank of $\cal{E}_{1,t}(\Q)$ is equal to one and this
suggest that $\op{rank}\cal{E}_{1}(\Q(t))=1$.

Before we prove that the set of rational points on the surface
$\cal{S}_{1}$ is dense in Euclidean topology we prove Zariski
density of the set of rational points. Because the curve
$\cal{E}_{1}$ is of positive rank over $\Q(t)$, the set of
multiplicities of the point $P$ i.e. $mP=(X_{m}(t),\;Y_{m}(t))$ for
$m=1,2,\ldots$\;, gives infinitely many rational curves on the curve
$\cal{E}_{1}$. Now, if we look on the curve $\cal{E}_{1}$ as on the
elliptic surface in the space with coordinates $(X,Y,t)$ we can see
that each rational curve $(X_{m},Y_{m},t)$ is included in the
Zariski closure, say $\cal{R}$, of the set of rational points on
$\cal{E}_{1}$. Because this closure consists of only finitely many
components, it has dimension two, and as the surface $\cal{E}_{1}$
is irreducible, $\cal{R}$ is the whole surface. Thus the set of
rational points on $\cal{E}_{1}$ is dense in the Zariski topology
and the same is true for the surface $\cal{S}_{1}.$

To obtain the statement of our theorem, we have to use two beautiful
results: Hurwitz Theorem (\cite[p. 78]{Sko})and the Silverman's
Theorem (\cite[p. 368]{Sil}). Let us recall that Hurwitz Theorem
states that if an elliptic curve $E$ defined over $\Q$ has positive
rank and one torsion point of order two (defined over $\Q$) then the
set $E(\Q)$ is dense in $E(\R)$. The same result holds if $E$ has
three torsion points of order two under assumption that we have
rational point of infinite order which lives on bounded branch of
the set $E(\R)$.

Silverman Theorem states that if $\cal{E}$ is an elliptic curve
defined over $\Q(t)$ with positive rank, then for all but finitely
many $t_{0}\in\Q$, the curve $\cal{E}_{t_{0}}$ obtained from the
curve $\cal{E}$ by the specialization $t=t_{0}$ has positive rank.
From this result we see that for all but finitely many $t\in\Q$ the
elliptic curve $\cal{E}_{1,t}$ is of a positive rank.

In order to finish the proof of our theorem let us define the
polynomial $X_{i}(t)=X-\mbox{coordinate of the torsion
point}\;T_{i}$ for $i=1,2,3$. Tieing these two cited theorems
together and the fact that we have inequalities
$X_{3}(t)<X_{P}(t)<X_{1}(t)<X_{2}(t)$ for each $t\in\R$ we conclude
that for all but finitely many $t$ the set $\cal{E}_{1,t}(\Q)$ is
dense in the set $\cal{E}_{1,t}(\R)$. This proves that the set
$\cal{E}_{1}(\Q)$ is dense in the set $\cal{E}_{1}(\R)$ in Euclidean
topology. Thus, the set $\cal{S}_{1}(\Q)$ is dense in the set
$\cal{S}_{1}(\R)$ in the Euclidean topology.
\end{proof}

Using the above result we can deduce the following
\begin{thm}\label{thm2}
There exists a polynomial $k\in\Z[t]$ with such a property that on
the elliptic curve $\cal{E}:\;y^2=x^3+k(t)$ there are four
independent $\Q(t)$-rational points in arithmetic progression.
\end{thm}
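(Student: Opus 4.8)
The plan is to use Theorem \ref{thm1} to manufacture an explicit polynomial parametrization of $\cal{S}_1$, then translate it back through the birational correspondences set up at the beginning of Section \ref{sec2}. Concretely, the proof of Theorem \ref{thm1} produced a point $P$ of infinite order on the elliptic curve $\cal{E}_1$ over $\Q(t)$; pulling $P$ back under $\phi^{-1}$ gives a point $(v_0(t),s_0(t))$ on $\cal{C}_1$, hence (taking $u=1$ in \eqref{R4}) rational functions $p(t),q(t),r(t),s(t)$ satisfying the system \eqref{R3}, and therefore a rational curve on $\cal{S}_1$. Clearing denominators we may assume $p,q,r,s\in\Z[t]$. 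By construction $a,b,c,d$ defined via \eqref{R1} then satisfy $3ac=b^2$ (this is precisely the condition encoded by $\cal{S}_1$), so the cubic $f(x)=ax^3+bx^2+cx+d$ has the property that under the change of variables at the start of the section it maps to a curve $y^2=x^3+k(t)$ with $k(t)=27(27a^2d-9abc+2b^3)\in\Z[t]$ and with the $X$-linear coefficient $27(3ac-b^2)=0$ vanishing identically. The four points $(0,p),(1,q),(2,r),(3,s)$ on $C:\;y^2=f(x)$ are in arithmetic progression by the choice \eqref{R1}, and since the map $(x,y)\mapsto(9ax+3b,27ay)$ is affine in $x$, their images $P_1,P_2,P_3,P_4$ on $\cal{E}:\;y^2=x^3+k(t)$ are again in arithmetic progression.

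It remains to verify that $P_1,P_2,P_3,P_4$ are independent in $\cal{E}(\Q(t))$. For this I would invoke specialization: it suffices to exhibit a single $t_0\in\Q$ for which the four specialized points $P_i(t_0)$ are independent in $\cal{E}_{t_0}(\Q)$, since the specialization homomorphism $\cal{E}(\Q(t))\to\cal{E}_{t_0}(\Q)$ is injective for all but finitely many $t_0$ (Silverman's theorem, already cited in the excerpt), so a relation among the $P_i$ over $\Q(t)$ would descend to one over $\Q$. Choosing a convenient value such as $t=1$, one computes the integer $k(1)$ and the four explicit rational points on $y^2=x^3+k(1)$, and then checks $\op{rank}\geq 4$ with the torsion excluded — in practice one computes a height-pairing (regulator) matrix for $P_1(1),\dots,P_4(1)$ and verifies that its determinant is nonzero, which guarantees $\Z$-linear independence in the Mordell--Weil group modulo torsion (and the curve $y^2=x^3+k$ has trivial or small torsion that is easily ruled out, e.g. by reduction mod a couple of primes).

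The main obstacle is the independence check: writing down $k(t)$ and the points is routine but the resulting polynomials have large degree, so the specialized curve $\cal{E}_{t_0}$ may have very large coefficients, and computing the canonical heights of four points and showing the $4\times 4$ regulator is nonzero is a genuine (though finite and mechanical) computation — one that is best delegated to a computer algebra system. A secondary point to be careful about is that the substitutions (taking $u=1$, clearing denominators, excluding the finitely many $t$ where $\cal{E}_1$ or $\cal{E}$ degenerates or where $a=0$) do not accidentally force the four points to collide or to become torsion; this is handled automatically once the regulator at the chosen $t_0$ is shown to be nonzero, since nonvanishing of the regulator already certifies that the points are distinct, nontorsion, and independent.
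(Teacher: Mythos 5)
Your overall strategy (pull a non-torsion point of $\mathcal{E}_{1}(\Q(t))$ back to $\mathcal{C}_{1}$, feed the resulting $(p,q,r,s)$ into \eqref{R1} to get $y^{2}=x^{3}+k(t)$ with four points in progression, then prove independence by specializing at $t=1$ and checking that the $4\times 4$ canonical-height determinant is nonzero) is exactly the paper's strategy. But the specific step ``pulling $P$ back under $\phi^{-1}$ gives a point $(v_{0}(t),s_{0}(t))$ on $\mathcal{C}_{1}$, hence \ldots a rational curve on $\mathcal{S}_{1}$'' fails for the point $P$ itself. If you apply the paper's formulas for $\phi$ to $P=(X_{P},Y_{P})$ you find $X_{P}-9C(t)=-18(3t^{2}+15t+19)(6t^{2}+18t+17)\neq 0$ but the numerator $2Y_{P}-27D(t)-6(3t^{2}+15t+19)(X_{P}-9C(t))$ vanishes identically, so $\phi(P)$ has $v\equiv 0$: $P$ is (up to sign) the image of the base point $Q=(0,\pm(3t^{2}+9t+7))$ used to put $\mathcal{C}_{1}$ in Weierstrass form. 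With $v=0$ and $u=1$ the parametrization \eqref{R4} gives $p=q=r=3t^{2}+9t+7$ and $s=\pm(3t^{2}+9t+7)$, i.e.\ a \emph{singular, trivial} point of $\mathcal{S}_{1}$ (a scaling of $(1,1,1,\pm1)$), not a rational curve in any useful sense. Substituting this into \eqref{R1} gives $a=b=c=0$, so $f(x)=p^{2}$ is constant and no elliptic curve $y^{2}=x^{3}+k(t)$ is produced at all. Your remark that degeneracies are ``handled automatically once the regulator at the chosen $t_{0}$ is shown to be nonzero'' does not rescue this: in the degenerate case there is no curve and no regulator to compute, so the argument never gets off the ground.

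The missing idea is precisely the one the paper supplies: do not pull back $P$ itself, but a translate such as $P+T_{1}$ (or some other point of the group with nonzero $v$-coordinate on $\mathcal{C}_{1}$, e.g.\ $2P$, after checking nondegeneracy). The paper computes $\phi(P+T_{1})=\bigl(\tfrac{3t^{2}+9t+10}{3(2t+3)},\,\cdot\bigr)$, which has $v\not\equiv 0$ and yields the explicit quartic polynomials $p,q,r,s$ and the explicit $k(t)$; from there your plan coincides with the paper's proof, which specializes at $t=1$ to $y^{2}=x^{3}-111610206808689600$ and verifies independence by showing the height-matrix determinant of the four specialized points is nonzero. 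So the independence verification you propose is fine; the gap is that your chosen pullback point must be replaced, and the proposal as written should include a check that the point being pulled back does not land on the trivial (singular) locus $p=q=r=\pm s$ of $\mathcal{S}_{1}$.
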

\begin{proof}
 In order to construct a polynomial $k\in\Z[t]$ with such a
property that for all $t\in\Z$ value $k(t)$ is an integer of AP4
type we use the points $P$ and $T_{1}$ we have constructed in the
proof of Theorem \ref{thm1}. Let us note that
\begin{equation*}
P+T_{1}=(3(99t^4+432t^3+795t^2+684t+251,-486(t^2+3t+3)(3t^2+3t+1)(3t^2+6t+4)).
\end{equation*}
Let us note that this point leads us to the point
\begin{equation*}
\phi(P+T_{1})=\Big(\frac{3t^2+9t+10}{3(2t+3)},\;-\frac{(1+3t+3t^2)(18t^4+162t^3+516t^2+738t+413)}{9(2t+3)^2}\Big),
\end{equation*}
which is the point on the curve $\cal{C}_{1}$. Performing all
necessary simplifications we find that the point $\phi(P+T_{1})$
lead us to the polynomial solution of the equation defining the
surface $\cal{S}_{1}$ in the form
\begin{equation*}
\begin{array}{ll}
  p=18t^4+54t^3+30t^2-72t-73, &\;  q=18t^4+90t^3+192t^2+210t+107, \\
  r=18t^4+126t^3+354t^2+456t+233, &\;  s=18t^4+162t^3+516t^2+738t+413.
\end{array}
\end{equation*}

Using this parametric solution and the remark on the beginning of
our proof we find the curve
\begin{equation}\label{R5}
\cal{E}:\;y^2=x^3+k(t),
\end{equation}
where
\begin{equation*}
k(t)=-324^2(2t+3)^2(3t^2+9t+10)^2(6t^2+18t+17)^2h(t),
\end{equation*}
and
\begin{equation*}
h(t)=(t^2+3t)^4+612(t^2+3t)^3+300(t^2+3t)^2-3504(t^2+3t)-5329.
\end{equation*}
On the curve $\cal{E}$ we have four points in arithmetic progression
\begin{align*}
&P_{1}=(tp(t),\;p(t)(18t^4+54t^3+30t^2-72t-73)),\\
&P_{2}=((t+1)p(t),\;p(t)(18t^4+90t^3+192t^2+210t+107)),\\
&P_{3}=((t+2)p(t),\;p(t)(18t^4+126t^3+354t^2+456t+233)),\\
&P_{4}=((t+3)p(t),\;p(t)(18t^4+162t^3+516t^2+738t+413)),
\end{align*}
where $p(t)=108(2t+3)(3t^2+9t+10)(6t^2+18t+17)$.

We will show that the above points are independent in the group
$\cal{E}(\Q(t))$ of all $\Q(t)$-rational points on the curve
$\cal{E}$. We specialize the curve $\cal{E}$ at $t=1$ and we get the
elliptic curve $\cal{E}_{1}$ given by the equation
\begin{equation*}
\cal{E}_{1}:\;y^2=x^3-111610206808689600.
\end{equation*}
On the curve $\cal{E}_{1}$ we have the points
\begin{equation*}
\begin{array}{ll}
  P_{1,\;1}=(487080,\; 62833320) &  P_{2,\;1}=(974160,\;901585080) \\
  P_{3,\;1}=(1461240,\;1734491880) &  P_{4,\;1}=(1948320,\; 2698910280),
\end{array}
\end{equation*}
which are specialization of the points $P_{1},P_{2},P_{3},P_{4}$ at
$t=1$. Using now program {\sc APECS} \cite{Con} we obtain that the
determinant of the height matrix of the points
$P_{1,\;1},\;P_{2,\;1},\;P_{3,\;1},\;P_{4,\;1}$ is equal to
266.618020487005. This proves that the points
$P_{1,\;1},\;P_{2,\;1},\;P_{3,\;1},\;P_{4,\;1}$ are independent on
the curve $\cal{E}_{1}$ and thus we get that that the points
$P_{1},P_{2},P_{3},P_{4}$ are independent on the curve $\cal{E}$.
\end{proof}

\begin{rem}
{\rm Let us consider the polynomial $g(t,x)=x^3+k(t)$, where
$k\in\Z[t]$ is defined in the proof of the above theorem. Then in
order to find a rational value of $t$ which lead to five points in
arithmetic progression on the curve $\cal{E}_{t}$ we must have
$g(t,(t-1)p(t))=\Box$ or $g(t,(t+4)p(t))=\Box$. In other words we
must be able to find rational point on one of the hyperelliptic
curves of genus 3
\begin{align*}
C_{1}:\;v^2=324&t^8+648t^7-4428t^6-21384t^5-34884t^4+\\
               &-17388t^3+12828t^2+12804t-791,\\
C_{2}:\;v^2=324&t^8+7128t^7+63612t^6+309096t^5+912816t^4\\
                &+1704132t^3+1985988t^2+1332624t+397009.
\end{align*}
Unfortunately, we are unable to find rational point on any of this
curves which lead to nonzero value of $k(t)$.

}

\end{rem}

\section{Rational points in arithmetic progressions on $y^2=x^n+k$ for $n\geq 4$}\label{sec3}

In this section we consider a natural generalization of the problem
we have considered in Section \ref{sec1}. To be more precise we
consider the following

\begin{ques}\label{ques1sec3}
Let $n\in\N$ be fixed and suppose that $n\geq 4$. It is possible to
find an integer $k$ with such a property that on the hyperelliptic
curve $y^2=x^n+k$ there are at least four rational points in
arithmetic progression?
\end{ques}

We show that if $n$ is odd then the answer on the above question is
affirmative and it is possible to find infinitely many demanded
$k$'s.

In the case when $n$ is even we show that it is possible to
construct infinitely many $k$'s with such a property that on the
curve $y^2=x^n+k$ there is three term arithmetic progression of
rational points with $x$-coordinates of the form $a,\;3a,\;5a$,
where $a$ is positive. Using the involution $(x,y)\mapsto (-x,y)$ we
can see that on these curves we will have six rational points in
arithmetic progression with $x$-coordinates of the form
$-5a,\;-3a,\;-a,\;a,\;3a,\;5a$.

We start with the following

\begin{thm}\label{thm1sec3}
Let us fix an $n\geq 2$. Then there are infinitely many integers $k$
with such a property that on the hyperelliptic curve
$H:\;y^2=x^{2n+1}+k$ there are four points in arithmetic
progression.
\end{thm}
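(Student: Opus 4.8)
The plan is to mimic the strategy behind Theorem \ref{thm2}: reduce the existence of a four-term arithmetic progression on $y^2=x^{2n+1}+k$ to the existence of rational points on an auxiliary variety, and then exhibit a one-parameter family of such points. First I would fix the arithmetic progression of $x$-coordinates to be $0,1,2,3$ (rescaling can be dealt with afterwards). Requiring $(i,y_i)\in H(\Q)$ for $i=0,1,2,3$ forces
\begin{equation*}
y_0^2=k,\quad y_1^2=1+k,\quad y_2^2=2^{2n+1}+k,\quad y_3^2=3^{2n+1}+k.
\end{equation*}
Eliminating $k$ turns this into the system $y_1^2-y_0^2=1$, $y_2^2-y_0^2=2^{2n+1}$, $y_3^2-y_0^2=3^{2n+1}$. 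Each equation $Y^2-y_0^2=m$ with $m\ne 0$ has the rational parametrization $y_0=(m-\lambda^2)/(2\lambda)$, $Y=(m+\lambda^2)/(2\lambda)$, so the first equation already fixes $y_0$ as a function of one parameter $\lambda$; substituting into the remaining two gives two conditions on $\lambda$, i.e. an intersection of two conics, hence (after checking it is nonempty and nonsingular) a curve of genus $\le 1$ over $\Q$. Rather than pushing this through abstractly, it is cleaner to homogenise: clear denominators by scaling the whole progression by a common factor $a$, replacing $x$-coordinates $0,1,2,3$ with $0,a,2a,3a$, so that $y_i^2=(ia)^{2n+1}+k$.

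The key trick — the one that makes the family genuinely infinite and valid for \emph{every} $n$ — is to choose $a$ so that the odd powers $(ia)^{2n+1}$ themselves become perfect squares times something controllable, e.g. take $a=\ell^2$ for a parameter $\ell$, so $(ia)^{2n+1}=i^{2n+1}\ell^{4n+2}=i^{2n+1}(\ell^{2n+1})^2$. Setting $w=\ell^{2n+1}$, the conditions become $y_i^2-k=i^{2n+1}w^2$ for $i=0,1,2,3$, and eliminating $k$:
\begin{equation*}
y_1^2-y_0^2=w^2,\qquad y_2^2-y_0^2=2^{2n+1}w^2,\qquad y_3^2-y_0^2=3^{2n+1}w^2.
\end{equation*}
This is now a homogeneous system in $(y_0,y_1,y_2,y_3,w)$ whose solution set is a curve $D\subset\mathbb{P}^4$, and it visibly contains no $n$-dependence in its \emph{shape} beyond the three constants $2^{2n+1},3^{2n+1}$. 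I would parametrize the first equation by $y_0=(s^2-w^2)/(2s)\cdot$(scaling), $y_1=(s^2+w^2)/(2s)$, plug into the other two, and obtain $D$ as an intersection of two conics in the $(s,w,y_2)$- and $(s,w,y_3)$-planes sharing the conic coordinate. Taking one obvious rational point (for instance the degenerate one with $w=0$, $y_0=y_1=y_2=y_3$, which corresponds to $\lambda$-values making everything constant) as origin, $D$ becomes an elliptic curve over $\Q$, and I expect it to have positive rank — this can be verified, exactly as in the proof of Theorem \ref{thm1}, by specializing and computing that some explicit point is non-torsion (its multiples having non-integral coordinates). Positive rank yields infinitely many rational points on $D$, each giving a tuple $(y_0,y_1,y_2,y_3,w)$, hence $\ell$ from $w=\ell^{2n+1}$ — and here is the one subtlety: we need $w$ to actually be a $(2n+1)$-st power. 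Because $2n+1$ is odd, $w\mapsto w^{2n+1}$ is a bijection on $\Q^\times$, so given any rational $w$ we simply take the homogeneous representative of the point on $D$ scaled so that $w$ is a $(2n+1)$-st power of a rational — scaling $(y_0,y_1,y_2,y_3,w)$ by $c$ scales $w$ by $c$, and we just need $c w$ to be a $(2n+1)$-st power, which we can arrange since we may multiply $c$ by $w^{2n}$ to replace $w$ by $w^{2n+1}$. Then $k=y_0^2-(0\cdot a)^{2n+1}=y_0^2$ (after rescaling back), wait — more carefully, $k$ is recovered from any one of the four relations, and distinct rational points on $D$ give (after clearing the scaling ambiguity) genuinely distinct values of $k$, so we get infinitely many $k$.

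The main obstacle I anticipate is \textbf{ensuring the infinitely many points of $D$ yield infinitely many \emph{distinct} and \emph{nonzero} values of $k$}, rather than collapsing to a single $k$ or to the trivial $k$ making the progression degenerate (all $y_i$ equal, forcing $2^{2n+1}w^2=3^{2n+1}w^2=0$). The scaling freedom in $\mathbb{P}^4$ means each point of $D$ is really a one-parameter family of affine solutions, and I must pin down the normalization ($a=\ell^2$, $w=\ell^{2n+1}$) consistently and then check that the resulting map from $D(\Q)$ to the set of admissible $k$'s has infinite image — the cleanest way is to observe that $k$, as a function on $D$, is a nonconstant rational function (it is nonconstant because the curve $y^2=x^{2n+1}+k$ has genus $\ge 1$, so for a fixed $k$ only finitely many four-term progressions exist by Faltings, hence only finitely many points of $D$ lie over each $k$), so its image is automatically infinite once $D(\Q)$ is infinite. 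A secondary routine check is that $D$ is a smooth genus-one curve over $\Q$ (the system could degenerate for special small $n$, so I would either verify smoothness uniformly in $n$ by computing the relevant resultant, or handle $n=2,3$ directly and run the generic argument for $n\ge 4$); and that the chosen auxiliary point is non-torsion for all $n$, which I expect to follow from a single specialization computation since the constants $2^{2n+1},3^{2n+1}$ enter the coefficients polynomially.
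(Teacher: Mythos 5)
Your reduction has a fatal genus miscount, and the whole strategy collapses at that point. After fixing the $x$-coordinates to be $0,a,2a,3a$ on the \emph{monic} curve $y^2=x^{2n+1}+k$ and writing $a=\ell^2$, $w=\ell^{2n+1}$, you arrive at the three simultaneous homogeneous conditions
\begin{equation*}
y_1^2-y_0^2=w^2,\qquad y_2^2-y_0^2=2^{2n+1}w^2,\qquad y_3^2-y_0^2=3^{2n+1}w^2 ,
\end{equation*}
i.e. a complete intersection of \emph{three} quadrics in $\mathbb{P}^4$. This is not ``an intersection of two conics'' and it does not become an elliptic curve: parametrizing the first conic and substituting turns the other two equations into two independent \emph{quartic} conditions in the parameter, so $D$ is the fiber product over the $\lambda$-line of two genus-one quartics, and a direct Jacobian check (the coefficients $1,2^{2n+1},3^{2n+1}$ being distinct and nonzero forces any would-be singular point to have all five coordinates zero) shows $D$ is a smooth curve of genus $5$. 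By Faltings' theorem $D(\Q)$ is finite for every $n$, so no argument of the form ``exhibit a non-torsion point, get infinitely many rational points'' can be run on $D$, and your construction cannot produce infinitely many values of $k$. The subsequent points you worry about (normalizing $w$ to be a $(2n+1)$-st power, distinctness of the resulting $k$'s) are fine, but they are downstream of a step that fails.

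The paper's proof avoids exactly this over-determination by \emph{not} insisting on the monic model at the outset. It takes $y^2=ax^{2n+1}+bx^2+cx+d$ through $(-1,p),(0,q),(1,r),(2,s)$; the four interpolation conditions determine $a,b,c,d$ linearly in $p^2,q^2,r^2,s^2$, and reaching the pure form only requires the \emph{two} conditions $b=c=0$, namely
\begin{equation*}
p^2-2q^2+r^2=0,\qquad -(2^{2n}-2)p^2-3q^2+(2^{2n}+2)r^2-s^2=0 ,
\end{equation*}
an intersection of two quadrics in $\mathbb{P}^3$ with an obvious rational point — genuinely a genus-one curve. A non-torsion point is then exhibited (via Nagell--Lutz on a multiple with non-integral coordinates), giving infinitely many $(p,q,r,s)$, and the monic form is recovered afterwards by the scaling $(x,y)\mapsto(ax,a^ny)$, which sends $y^2=ax^{2n+1}+d$ to $Y^2=X^{2n+1}+da^{2n}$ and the progression $-1,0,1,2$ to $-a,0,a,2a$. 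If you want to salvage your write-up, you must import this idea: let the leading coefficient (equivalently, the common difference after rescaling) be a free parameter so that only two quadratic conditions remain, rather than pinning the curve to be monic and paying for it with a third quadric.
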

\begin{proof}
In order to prove our theorem let us consider a hyperelliptic curve
with the equation $y^2=ax^{2n+1}+bx^2+cx+d=:f(x)$, where
\begin{equation}\label{sys1sec3}
\begin{array}{lll}
  a=\frac{-p^2+3q^2-3r^2+s^2}{2^{2n+1}-2}, & & b=\frac{p^2-2q^2+r^2}{2}, \\
  c=\frac{-(2^{2n} - 2)p^2-3q^2+(2^{2n}+2)r^2-s^2}{2^{2n+1}-2},& & d=q^2.
\end{array}
\end{equation}
For $a,b,c,d$ defined in this way we have that the points
$(-1,p),\;(0,q),\;(1,r),\;(2,s)$ are on the curve and are in
arithmetic progression. Now, let us note that if we are able to show
that the system of equations in the variables $p,q,r,s$ given by
\begin{equation}\label{sys2sec3}
\begin{cases}
p^2-2q^2+r^2=0,\\
-(2^{2n} - 2)p^2-3q^2+(2^{2n}+2)r^2-s^2=0,
\end{cases}
\end{equation}
has infinitely many rational solutions, then the points
\begin{equation*}
(-a,\;pa^{n}),\quad (0,\;qa^{n}),\quad (a,\;ra^{n}),\quad
(2a,\;sa^{n})
\end{equation*}
will be in arithmetic progression on the curve
$y^2=x^{2n+1}+da^{2n}$, where $a,b$ are given by (\ref{sys1sec3}).

Now, we show that the system (\ref{sys2sec3}) has infinitely many
solutions in rational numbers. In order to do this let us
parametrize all solutions of the first equation in the system
(\ref{sys2sec3}). Using standard method we find parametrization
given by
\begin{equation*}
p=2u^2-4uv+v^2,\quad q=2u^2-2uv+v^2,\quad r=-2u^2+v^2.
\end{equation*}
Putting now the calculated values for $p,q,r$ into the second
equation of the system (\ref{sys2sec3}) and taking $u=1$ we get
\begin{equation*}
\cal{C}^{o}_{n}:\;s^2=v^4+4(2^{2n+1}-1)v^3-8(3\cdot
2^{2n}-1)v^2+8(2^{2n+1}-1)v+4.
\end{equation*}
The curve $\cal{C}^{o}_{n}$ is a quartic curve with rational point
$Q=(0,-2)$. If we treat $Q$ as a point at infinity on the curve
$\cal{C}$ and use the method described in \cite[page 77]{Mor} one
more time we conclude that $\cal{C}^{o}_{n}$ is birationally
equivalent over $\Q$ to the elliptic curve with the Weierstrass
equation
\begin{equation*}
\cal{E}^{o}_{n}:\;Y^2=X^3-27(3\cdot2^{4n+2}+1)X+54(9\cdot
2^{4n+2}-1).
\end{equation*}

The mapping $\phi:\;\cal{E}^{o}_{n}\ni (X,Y) \mapsto (v,s)\in
\cal{C}^{o}_{n}$ is given by
\begin{align*}
&v=\frac{2Y - 27\cdot 2^{2n+2}(2^{4n+2}-1)}{6(X-3(3\cdot2^{4n+2}-1))}-(2^{2n+1}-1),\\
&s=-(v+2^{2n+1}-1)^2+\frac{2X+3(3\cdot2^{4n+2}-1)}{9}.
\end{align*}
Inverse mapping $\psi:\;\cal{C}^{o}_{n}\ni (v,s) \mapsto (X,Y)\in
\cal{E}^{o}_{n}$ has the form
\begin{align*}
&X=\frac{3}{2}(3v^2+6(2^{2n+1}-1)v+3s-4(3\cdot2^{2n}-1)),\\
&Y=\frac{27}{2}(v^3+3(2^{2n+1}-1)v^2
-4(3\cdot2^{2n}-1)v+(2^{2n+1}-1)s+2(2^{2n+1}-1) ).
\end{align*}
Let us note that the curve $\cal{E}^{o}_{n}$ has three points of
order two
\begin{equation*}
T_{1}=(6,\;0),\quad T_{2}=(3(3\cdot2^{2n+1}-1),\;0),\quad
T_{3}=(-3(3\cdot2^{2n+1}+1),\;0).
\end{equation*}
On the curve $\cal{E}^{o}_{n}$ we have also the point $P_{n}$ given
by
\begin{equation*}
P_{n}=(-3(3\cdot2^{2n+1}-5),\;54(2^{2n+1}-1)).
\end{equation*}
It is easy to see that the point $P_{n}$ is of infinite order on the
curve $\cal{E}^{o}_{n}$. In order to prove this we compute the point
$4P_{n}=(X,Y)$. We have where
\begin{align*}
&X=\frac{3\cdot2^{-4n-2}(3+3\cdot2^{16n}-2^{4n+4}+13\cdot2^{8n+1}+2^{12n+5}}{(2^{4n}-1)^2},\\
&Y=\frac{3\cdot2^{-2n-1}(3\cdot2^{8n}+1)}{2^{4n}-1}X+27\cdot2^{2n}(2^{4n}-1).
\end{align*}
It is easy to see that under our assumption $n\geq 2$ the
$X$-coordinate of the point $4P_{n}$ is not an integer. Thus,
theorem of Nagell and Lutz implies that the point $P_{n}$ is of
infinite order. This implies that the set of rational solutions of
the system (\ref{sys2sec3}) is infinite and thus our theorem is
proved.
\end{proof}

\begin{rem}
{\rm Using {\tt mwrank} program we calculated the rank $r_{n}$ of
elliptic curve $\cal{E}^{o}_{n}$ and generators for the free part of
the group $\cal{E}^{o}_{n}(\Q)$ for $2\leq n\leq 8$.  Results of our
computations are given below.}

\begin{equation*}
\begin{array}{l|l|l}
  n & r_{n} & \mbox{Generators for the free part of the group}\;\cal{E}^{o}_{n}(\Q)  \\
  \hline
  2 & 1 & (303, 17820) \\
  3 & 1 & (1167, 6966) \\
  4 & 2 & (4623, 27702),\; (11773, 1175552)\\
  5 & 2 & (18447, 110646),\; (350011167/6241,6184493104374/493039)\\
  6 & 1 & (73743, 442422) \\
  7 & 3 & (294927, 1769526),\; (153394089/400, 1214402001813/800),\\
    &   & (124356529/256, 1101957449705/4096) \\
  8 & 1 & (1179663, 7077942) \\
  \hline
\end{array}
\end{equation*}
\end{rem}
Form the above theorem we get an interesting

\begin{cor}\label{cor1sec3}
Let $n\geq 2$ and consider the family of hyperelliptic curves given
by the equation $C_{k}:\;y^2=x^{2n+1}+k$. Then the set $\cal{A}$ of
integers $k$ with such a property that on the curve $C_{k}$ there
are at least 8 rational points, is infinite. Moreover, we can
construct the set $\cal{A}$ that for each pair
$k_{1},\;k_{2}\in\cal{A}$ the curves $C_{k_{1}},\;C_{k_{2}}$ are not
isomorphic.
\end{cor}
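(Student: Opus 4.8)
The plan is to combine Theorem~\ref{thm1sec3} with two elementary observations: the involution $(x,y)\mapsto(-x,y)$ on $y^2=x^{2n+1}+k$ produces extra rational points, and one can rescale the variable $x$ to change the value of $k$ in a controlled way. Recall that Theorem~\ref{thm1sec3} gives infinitely many rational points $(v,s)$ on the curve $\cal{C}^{o}_{n}$ (coming from the point $P_{n}$ of infinite order on $\cal{E}^{o}_{n}$), each yielding a quadruple $(-1,p),(0,q),(1,r),(2,s)$ on $y^2=f(x)$ with $f$ as in the proof, and hence a quadruple $(-a,pa^{n}),(0,qa^{n}),(a,ra^{n}),(2a,sa^{n})$ in arithmetic progression on $C_{k}:\;y^2=x^{2n+1}+k$ with $k=da^{2n}$, where $a,d$ depend on the chosen rational solution $(p,q,r,s)$.

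First I would observe that applying $(x,y)\mapsto(-x,y)$ to these four points gives four further rational points on $C_{k}$ (with $x$-coordinates $a,0,-a,-2a$), so that $C_{k}$ carries at least $8$ rational points for every such $k$ — after checking that no coincidences among $x$-coordinates force fewer than $8$ distinct points, which is automatic once $a\neq 0$, and that $a\neq 0$ holds away from finitely many parameter values. This already shows the set $\cal{A}$ of admissible $k$ is nonempty and, since $P_{n}$ has infinite order, infinite: the $x$-coordinate $v$ of $mP_{n}$ ranges over infinitely many rational values, producing infinitely many $k$.

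The second point — controlling the isomorphism class — is where the only real work lies. Two curves $y^2=x^{2n+1}+k_{1}$ and $y^2=x^{2n+1}+k_{2}$ are isomorphic over $\Q$ (or $\overline{\Q}$) precisely when $k_{2}/k_{1}$ is a $(2n+1)$-st power times a square, i.e. when $k_{2}=k_{1}\cdot t^{2}$ for some rational $t$ after absorbing $(2n+1)$-st powers; more carefully, the class of $k$ modulo $(\Q^{*})^{2}\cdot(\Q^{*})^{2n+1}$ — equivalently modulo $(\Q^{*})^{g}$ with $g=\gcd(2,2n+1)=1$ handled via $x\mapsto \lambda x$, $y\mapsto\lambda^{(2n+1)/2}y$ when that makes sense, so in fact the scaling $x\mapsto\lambda^{2}x$, $y\mapsto\lambda^{2n+1}y$ sends $k$ to $\lambda^{2(2n+1)}k$ — determines the class. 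So I would quotient the (infinitely many) values $k=da^{2n}$ by this equivalence. The plan is: pass to the squarefree part (or the appropriate $2(2n+1)$-free part) of each $k$, note that an infinite set of integers cannot all be equivalent under multiplication by $2(2n+1)$-st powers since each equivalence class meets $\Z$ in a set of density zero / is parametrized by a single integer up to that power, and then extract an infinite subset $\cal{A}$ of pairwise inequivalent values by a greedy/diagonal argument. The main obstacle is making this last step clean: one must rule out the degenerate possibility that the map $m\mapsto k(mP_{n})$, after removing $2(2n+1)$-st powers, has finite image — this is handled by a height argument, since the naive height of the $x$-coordinate of $mP_{n}$ grows (quadratically in $m$ by the theory of canonical heights), forcing the squarefree parts of the associated $k$-values to be unbounded, hence infinitely many distinct classes.

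Thus I would conclude: choose recursively $k_{1},k_{2},\ldots$ among the values $\{k(mP_{n}):m\geq 1\}$ so that each new one is inequivalent to all previous (possible by the unboundedness just argued), discard the finitely many parameters making $a=0$ or $k=0$, and set $\cal{A}=\{k_{1},k_{2},\ldots\}$; every $C_{k_{i}}$ has at least $8$ rational points by the first paragraph, and the $C_{k_{i}}$ are pairwise non-isomorphic by construction.
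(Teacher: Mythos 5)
Two steps in your argument do not hold up as written. First, the map $(x,y)\mapsto(-x,y)$ is \emph{not} an involution of $C_{k}:\;y^2=x^{2n+1}+k$: since the degree $2n+1$ is odd, substituting $-x$ yields $y^2=-x^{2n+1}+k$, a different curve, so your four claimed extra points with $x$-coordinates $a,0,-a,-2a$ simply do not lie on $C_{k}$. (That sign trick is what works for the even-degree curves $y^2=x^{2n}+k$ of Theorem \ref{thm2sec3}.) The correct way to pass from the four points $(-a,pa^{n}),(0,qa^{n}),(a,ra^{n}),(2a,sa^{n})$ of Theorem \ref{thm1sec3} to eight rational points is the hyperelliptic involution $(x,y)\mapsto(x,-y)$, after noting that generically none of the $y$-coordinates vanishes and $a\neq 0$; this is how the paper's ``immediate consequence'' should be read. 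This slip is local and easily repaired.

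Second, and more seriously, your justification of the key step --- that the produced values of $k$ fall into infinitely many isomorphism classes --- is a non sequitur. The isomorphism class of $C_{k}$ is governed by $k$ modulo $(\Q^{*})^{2(2n+1)}$, and growth of the height of $x(mP_{n})$ (hence of the associated $k$-values) does not force this class to vary: a sequence of the form $c\,\lambda_{m}^{2(2n+1)}$ has height tending to infinity while lying in a single class, so ``unbounded height, hence unbounded squarefree part, hence infinitely many classes'' does not follow. Nothing in your height argument excludes the degenerate possibility that all but finitely many of the $k(mP_{n})$ are equivalent. The paper closes exactly this gap with Faltings' theorem: the values of $k$ are given by an explicit polynomial expression in the parameter $v$ coming from the curve $\cal{C}^{o}_{n}$ (essentially $q=v^2-2v+2$), and the condition that a new parameter value produce a $k$ equivalent to an already chosen $k_{i}$ is a rational point on a superelliptic curve such as $(v^2-2v+2)^2=(v_{i}^2-2v_{i}+2)^2w^{2(2n+1)}$, which has genus at least $2$ and hence only finitely many rational points; since $\cal{C}^{o}_{n}$ has infinitely many rational points (the point $P_{n}$ has infinite order), one can greedily choose an infinite pairwise-inequivalent family. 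Your greedy extraction is the right outline, but it needs this finiteness input (Faltings or an equally strong substitute), not the height estimate you invoke.
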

\begin{proof}
First part of our theorem is an immediate consequence of the
previous theorem. Second part of our theorem is a simple consequence
of the following reasoning. Curves $C_{k_{1}}$ and $C_{k_{2}}$ are
isomorphic if and only if $k_{1}/k_{2}\in\Q^{10}$. From the previous
theorem we can take $k=q^2=(v^2-2v+2)^2$ for some $v\in\Q$ which is
calculated from the point which lies on the elliptic curve
$\cal{C}^{o}_{n}$. Let us suppose that we constructed the integers
$k_{1},\;k_{2},\;\ldots,\;k_{n}$ such that the curves $C_{k_{i}}$
are pairwise non-isomorphic. We have that
$k_{i}=q_{i}^{2}=(v_{i}^2-2v_{i}+2)^2$ for $i=1,\;2,\ldots,\;n$.
Then the $n$ curves $(2v^2-2v+2)^2=(v_{i}^2-2v_{i}+2)^2w^{10}$ for
$i=1,2\;\ldots,\;n$ are all of genus $\geq 2$, thus the set of
$\C_{1}(\Q)\cup\ldots\cup C_{n}(\Q)$ is finite (Faltings Theorem
\cite{Fal}). Because the elliptic curve $\cal{C}^{o}_{n}$ has
infinitely many rational points we can find $v_{n+1}$ such that the
curve $C_{k_{n+1}}$ with $k_{n+1}=(v_{n+1}^2-2v_{n+1}+2)^2$ is not
isomorphic to any of the curves $C_{i}$ for $i=1,2\ldots,\;n$. Using
now the presented reasoning we can construct an infinite set
$\cal{A}$ with demanded property.
\end{proof}

The above corollary and the Corollary \ref{cor2sec3} gives a
generalization A. Bremner result from \cite{Bre} (Theorem 2.1 and
Theorem 3.1) .

Theorem \ref{thm1sec3} we have proved suggest the following

\begin{ques}\label{ques2sec3}
Let us fix an integer $n\geq 1$. What is the smallest value
$|k_{2n+1}|\in\N$, say $M_{2n+1}$, with such a property that on the
curve $y^2=x^{2n+1}+k_{2n+1}$ there are at least four rational
points in arithmetic progression?
\end{ques}

\begin{exam}
{\rm If we take $n=2$ then from the existence of rational point of
infinite order on the curve $\cal{E}^{o}_{2}$ we get that
\begin{equation*}
M_{5}\leq
3391541395170708368688169980^4\cdot2609^2\cdot127165689041^2.
\end{equation*}
}
\end{exam}

Moreover in the light of Corollary \ref{cor1sec3} it is natural to
sate the following

\begin{ques}\label{ques3sec3}
Let us fix an integer $n\geq 2$ and consider the hyperelliptic curve
$H_{2n+1}:\;y^2=x^{2n+1}+k_{2n+1}$ where $k_{2n+1}$ is constructed
with the use method we have presented in the proof of the previous
theorem. In particular on the curve $H_{2n+1}$ we have four points
in arithmetic progression, say $P_{i}$ for $i=1,2,3,4$. Are the
divisors $(P_{i})-(\infty)$ independent in the jacobian variety
associated with the curve $H_{2n+1}$?
\end{ques}

Now, we prove the following
\begin{thm}\label{thm2sec3}
Let us fix an $n\geq 2$. Then there are infinitely many integers $k$
with such a property that on the hyperelliptic curve
$H:\;y^2=x^{2n}+k$ there are six points in arithmetic progression.
\end{thm}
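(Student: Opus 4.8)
The plan is to mimic the proof of Theorem \ref{thm1sec3}, but now arranging for a three-term progression with $x$-coordinates $a,\,3a,\,5a$ and then exploiting the involution $(x,y)\mapsto(-x,y)$ of $y^2=x^{2n}+k$ to double the progression to the six-term one $-5a,-3a,-a,a,3a,5a$. First I would start with a curve $y^2=ax^{2n}+bx+c=:f(x)$ and choose $a,b,c$ (as rational functions of parameters $p,q,r$) so that $f(1)=p^2$, $f(3)=q^2$, $f(5)=r^2$; this is a nonsingular linear system in $a,b,c$ for fixed evaluation points, so it has a unique solution, e.g.
\begin{equation*}
a=\frac{p^2-2q^2+r^2}{?},\qquad b=\cdots,\qquad c=\cdots,
\end{equation*}
the exact denominators being $3^{2n}-\text{(something)}$ type expressions coming from the Vandermonde-like determinant in $1^{2n},3^{2n},5^{2n}$. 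The point is that whenever such $a,b,c$ exist with $a\neq 0$, the three points $(1,p),(3,q),(5,r)$ lie on $y^2=ax^{2n}+bx+c$ in arithmetic progression, and by the usual homogenization trick the points $(a,pa^{n}),(3a,qa^{n}),(5a,ra^{n})$ (or a suitable rescaling to clear denominators) lie on a curve $y^2=x^{2n}+k$ with $k$ an explicit polynomial in $p,q,r$.

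Next I would impose whatever linear constraint on $(p,q,r)$ is needed to kill the unwanted cross terms, analogously to the system (\ref{sys2sec3}); since here we have only three evaluation points rather than four, one expects a single conic condition on $(p:q:r)$, something like $p^2-2q^2+r^2=0$ coming from the requirement that $f$ be of the shape $x^{2n}+(\text{lower order allowed by the involution})$ — actually, because the six-point version must be symmetric under $x\mapsto-x$, the genuine requirement is that the degree-$(2n-1)$ down to degree-$1$ coefficients all vanish, leaving $y^2=x^{2n}+k$. This forces the quadratic $bx+c$ part (and the $a$-normalization) to collapse onto a conic in $(p,q,r)$, which is parametrized rationally from the evident rational point (the constant/diagonal solution $p=q=r$). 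Substituting that parametrization, setting one of the two free parameters to $1$, and plugging into the remaining condition yields a quartic curve $\mathcal{C}^{e}_{n}:\;s^2=(\text{quartic in }v)$ with an obvious rational point, which by \cite[page 77]{Mor} is birational over $\Q$ to an explicit Weierstrass cubic $\mathcal{E}^{e}_{n}$.

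Then, as in Theorem \ref{thm1sec3}, I would exhibit an explicit non-torsion point $P_n$ on $\mathcal{E}^{e}_{n}$: write down $P_n=(X_n,Y_n)$ with $X_n,Y_n$ explicit rational functions of $2^{2n}$, compute (say) $4P_n$, and observe that for $n\geq 2$ the $X$-coordinate of $4P_n$ fails to be an integer, so by Nagell--Lutz the point $P_n$ has infinite order. Hence $\mathcal{E}^{e}_{n}(\Q)$ is infinite, giving infinitely many rational points on $\mathcal{C}^{e}_{n}$, hence infinitely many $(p,q,r)$, hence infinitely many $k$ (one must check the map to $k$ is non-constant along the orbit of $P_n$, i.e. that multiples of $P_n$ give genuinely distinct $k$, which follows since the $x$-coordinate $v$ takes infinitely many values and $k$ depends non-trivially on $v$). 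For each such $k$ the three points $(a,pa^n),(3a,qa^n),(5a,ra^n)$ are on $y^2=x^{2n}+k$; applying $(x,y)\mapsto(-x,y)$ adjoins $(-a,pa^n),(-3a,qa^n),(-5a,ra^n)$, and since $a>0$ can be arranged (replacing $a$ by $a$ or $-a$, or squaring appropriately, and noting $k$ is unchanged under $x\mapsto -x$) we obtain six points in arithmetic progression with $x$-coordinates $-5a,-3a,-a,a,3a,5a$.

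The main obstacle I anticipate is purely computational bookkeeping: getting the normalization right so that the resulting $k$ is an \emph{integer} (clearing the Vandermonde denominators by the correct power, exactly as the factor $a^{2n}$ and the denominators in (\ref{sys1sec3}) were handled), and then finding an explicit point $P_n$ on $\mathcal{E}^{e}_{n}$ whose formula is uniform in $n$ and provably non-torsion for all $n\geq 2$ — the Nagell--Lutz integrality check on $4P_n$ (or $2P_n$) requires massaging a messy rational function of $2^{2n}$ into a form where non-integrality is manifest. There is also a minor subtlety in ensuring $a\neq 0$ and $k\neq 0$ along the infinite family, which is handled by excluding the finitely many bad fibers exactly as $\cal A$ was defined in Theorem \ref{thm1sec3}; none of this is conceptually hard, but it is where the real work lies.
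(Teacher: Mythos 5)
Your reduction step contains a genuine error. For the odd case the scaling $(x,y)\mapsto(ax,a^ny)$ turns $y^2=ax^{2n+1}+d$ into $Y^2=X^{2n+1}+da^{2n}$ because $a^{2n}\cdot a=a^{2n+1}$ is exactly the right power; for even exponent this fails. Starting from $y^2=ax^{2n}+bx+c$ and setting $(X,Y)=(ax,a^ny)$ you get
\begin{equation*}
Y^2=aX^{2n}+a^{2n-1}bX+a^{2n}c,
\end{equation*}
so the leading coefficient $a$ survives, and no rational rescaling $(X,Y)=(\lambda x,\mu y)$ can remove it unless $a$ is a rational \emph{square} (one needs $\mu^2a=\lambda^{2n}$, forcing $a=(\lambda^n/\mu)^2$). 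Hence your claim that the points $(a,pa^n),(3a,qa^n),(5a,ra^n)$ land on a curve of the form $y^2=x^{2n}+k$ is false as stated, and with it your count of conditions: killing $b$ alone gives a single conic in $(p,q,r)$ (which indeed has infinitely many rational points, but produces curves $y^2=ax^{2n}+c$ with non-square $a$ in general), while the unacknowledged requirement that $a$ be a square is a second quadratic condition. Your writeup is also internally inconsistent on this point: you predict ``a single conic condition,'' parametrize it, and then invoke ``the remaining condition'' to get a quartic curve, but in your setup there is no remaining condition.

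The paper avoids this by never introducing a leading coefficient: it takes the monic ansatz $y^2=x^{2n}+ax^2+bx+c$ with $f(1)=p^2$, $f(3)=q^2$, $f(5)=r^2$ and imposes the two conditions $a=b=0$, which amount to the intersection of two quadrics
\begin{equation*}
q^2=p^2+3^{2n}-1,\qquad r^2=p^2+5^{2n}-1,
\end{equation*}
a genus-one curve; the first conic is parametrized, substitution into the second gives a quartic model $\cal{C}^{e}_{n}$ with an obvious rational point, and an explicit point $P_{n}=(3(u^2+v^2+1),-27uv)$ (with $u=3^n$, $v=5^n$) on the associated Weierstrass curve is shown to be non-torsion via Nagell--Lutz applied to $2P_{n}$ -- this elliptic-curve part of your plan matches the paper. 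Integrality of $k$ is then obtained by the harmless scaling $(x,y)\mapsto(mx,m^ny)$, which keeps the curve monic and moves the six abscissae to $\pm m,\pm 3m,\pm 5m$. Note that if you repaired your route by explicitly demanding that the leading coefficient be a square, say $a=w^2$, and dehomogenized $w=1$, you would be led to exactly the paper's system; but as written the central reduction does not go through.
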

\begin{proof}
In order to prove our theorem let us consider a hyperelliptic curve
with the equation $y^2=x^{2n}+ax^2+bx+c=:f(x)$, where
\begin{align*}
&a=\frac{p^2-2q^2+r^2-5^{2n}+2\cdot3^{2n}-1}{8},\\
&b=\frac{-2p^2+3q^2-r^2+5^{2n}-3^{2n+1}+2}{2},\\
&c=\frac{15p^2-10q^2+3r^2-3\cdot5^{2n}+10\cdot3^{2n}-15}{8}.
\end{align*}
Then we have that $f(1)=p^2,\;f(3)=q^2,\;f(5)=r^2$. It is easy to
see that in order to prove our theorem it is enough to find
infinitely many solutions of the system of equations $a=b=0$ in
rational numbers $p,q,r$ . Indeed, if $a=b=0$, then on the curve
$y^2=x^{2n}+c$ we will have six points in arithmetic progression
which $x$-coordinates belong to he set $\{-5,-3,-1,1,3,5\}$.

System $a=b=0$ is equivalent with the system of equations
\begin{equation}\label{sys3sec3}
\begin{cases}
q^2=p^2+3^{2n}-1,\\
r^2=p^2+5^{2n}-1.
\end{cases}
\end{equation}
By putting $p=u+1,\;q=tu+3^n$ we find that all rational solutions of
the equation $q^2=p^2+3^{2n}-1$ are contained in the formulas
\begin{equation*}
p=\frac{t^2-2\cdot3^{n}t+1}{t^2-1},\quad\quad
q=-\frac{3^{n}t^2-2t+3^{n}}{t^2-1}.
\end{equation*}
Putting the value of $p$ we have calculated into the second equation
of the system (\ref{sys3sec3}) we get the equation of the quartic
curve
\begin{equation*}
\cal{C}^{e}_{n}:\;s^2=5^{2n}t^{4}-4\cdot
3^{n}t^{3}-2(5^{2n}-2\cdot3^{2n}-2)t^2-4\cdot 3^{n}t+5^{2n}=:g(t),
\end{equation*}
where $s=(t^2-1)r$. For convenience of our calculations let us put
$u=3^n$ and $v=5^n$. Then, the polynomial $g$ takes the form
$g_{u,v}(t)=v^2t^4-4ut^3-2(v^2-2u^2-2)t^2-4u+v^2$.

The curve $\cal{C}^{e}_{n}: s^2=g_{u,v}(t)$ is a quartic curve with
rational point $Q=(0,v)$. If we treat $Q$ as a point at infinity on
the curve $\cal{C}^{e}_{n}$ and use the method described in
\cite[page 77]{Mor} one more time we conclude that $\cal{C}^{e}_{n}$
is birationally equivalent over $\Q$ to the elliptic curve with the
Weierstrass equation
\begin{align*}
\cal{E}^{e}_{n}:\;Y^2=X^3-27(v^4-&(u^2+1)v^2+u^4-u^2+1)X +\\
                                 &+27(1+u^2-2v^2)(2u^2-v^2-1)(u^2+v^2-2).
\end{align*}
The mapping $\phi:\;\cal{E}^{e}_{n}\ni (X,Y) \mapsto (t,x)\in
\cal{C}^{e}_{n}$ is given by
\begin{align*}
&t=\frac{v^3Y-27u(u^2-v^2)(v^2-1)}{3v^2(v^2X-3(3u^2-2v^2+v^4-2u^2v^2)}+\frac{u}{v^2},\\
&s=-\frac{1}{v^3}\Big(v^2t-\frac{u}{v^2}\Big)^2+\frac{v^2X+9u^2-6(u^2+1)v^2+3v^4}{9v^3}.
\end{align*}
Inverse mapping $\psi:\;\cal{C}^{e}_{n}\ni (t,s) \mapsto (X,Y)\in
\cal{E}^{e}_{n}$ has the form
\begin{align*}
&X=\frac{2-6tu+2u^2+3sv+(3t^2-1)v^2}{2},\\
&Y=-\frac{27}{2}(su+((3t^2+1)u-2t(u^2+1))v-stv^2-(t^3-t)v^3).
\end{align*}
Let us note that the curve $\cal{E}^{e}_{n}$ has three points of
order two
\begin{equation*}
T_{1}=(3(1+u^2-2v^2),\;0),\quad T_{2}=(3(u^2+v^2-2),\;0),\quad
T_{3}=(-3(1-2u^2+v^2),\;0).
\end{equation*}
On the curve $\cal{E}^{e}_{n}$ we have also the point $P_{n}$ given
by
\begin{equation*}
P_{n}=(3(u^2+v^2+1),\;-27uv).
\end{equation*}
It is easy to see that the point $P$ is of infinite order on the
curve $\cal{E}^{2}_{n}$. Indeed, if we calculate
$2P_{n}=(X_{2},Y_{2})$ then we have
\begin{equation*}
X_{2}=\frac{3(3u^4-2(u^2+1)u^2v^2+(3-2u^2+3u^4)v^4)}{4u^2v^2}.
\end{equation*}
Due to the fact that $u=3^n$ and $v=5^n$ it is easy to see that for
any choice for $n\geq 2$ the above fraction is not an integer. From
the Nagell-Lutz theorem we get that the point $P_{n}$ is not of
finite order on the curve $\cal{E}^{e}_{n}$. This implies that the
set of rational solutions of the system (\ref{sys3sec3}) is
infinite. As a consequence we get that for any $n\geq 2$ we can
construct infinitely many $k$'s with such a property that on the
curve $y^2=x^{2n}+k$ we have three points in arithmetic progression
with $x$-coordinates belonging to the set $\{m,\;3m,\;5m\}$ with
$m>0$. Note that on this curve we have also rational points with
$x$-coordinates belonging to the set $\{-m,\;-3m,\;-5m\}$. This
observation finishes the proof of our theorem.

\end{proof}
\begin{rem}
{\rm Using {\tt mwrank} program we calculated the rank $r_{n}$ of
elliptic curve $\cal{E}^{e}_{n}$ and generators for the free part of
the group $\cal{E}^{e}_{n}(\Q)$ for $2\leq n\leq 10$.  Results of
our computations are given below.}

\begin{equation*}
\begin{array}{l|l|l}
  n & r_{n} & \mbox{Generators for the free part of the group}\;\cal{E}^{e}_{n}(\Q)  \\
  \hline
  2 & 1 & (3840, 176256) \\
  3 & 2 & (80808, 14478912),\;(130704, 40007520) \\
  4 & 2 & (1230432, 116328960),\; (31769376/25, 24804389376/125)\\
  5 & 1 & (36278088, 68748343488)\\
  6 & 1 & (836384640, 5022400795776)\\
  7 & 1 & (19863352968, 370669722011712)\\
  8 & 1 & (480955252992, 27480236025415680)\\
  \hline
\end{array}
\end{equation*}
\end{rem}

Using similar argument as in the proof of the Corollary
\ref{cor1sec3} we can prove the following

\begin{cor}\label{cor2sec3}
Let $n\geq 2$ and consider the family of hyperelliptic curves given
by the equation $C_{k}:\;y^2=x^{2n}+k$. Then the set $\cal{A}$ of
integers $k$ with such a property that on the curve $C_{k}$ there
are at least 12 rational points, is infinite. Moreover, we can
construct the set $\cal{A}$ that for each pair
$k_{1},\;k_{2}\in\cal{A}$ the curves $C_{k_{1}},\;C_{k_{2}}$ are not
isomorphic.
\end{cor}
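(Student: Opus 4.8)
The plan is to mimic, mutatis mutandis, the proof of Corollary~\ref{cor1sec3}, replacing the family $y^2=x^{2n+1}+k$ by the family $C_k:\;y^2=x^{2n}+k$ and using the rational points produced by Theorem~\ref{thm2sec3} in place of those from Theorem~\ref{thm1sec3}. First I would record the two ingredients. From Theorem~\ref{thm2sec3}, for each $n\geq 2$ the curve $\cal{E}^{e}_{n}$ has a rational point of infinite order, hence infinitely many rational points; each such point yields (via the map $\phi$ and the substitution $p=u+1$, $q=tu+3^n$) a rational value of $t$ and thereby a rational triple $(p,q,r)$ solving the system~(\ref{sys3sec3}), which in turn produces an integer (after clearing denominators) $k=c$ such that $y^2=x^{2n}+k$ carries six rational points in arithmetic progression with $x$-coordinates in $\{-5,-3,-1,1,3,5\}$. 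Since each of these six points lies on the curve and is visibly nontrivial (the $x$-coordinates are distinct nonzero integers, so the points are distinct and not the point at infinity), the curve $C_k$ has at least $12$ rational points: the six listed together with their images under the involution $(x,y)\mapsto(x,-y)$, provided $y\neq 0$, which holds because $y^2=x^{2n}+k$ with $x\in\{\pm1,\pm3,\pm5\}$ gives $y^2$ equal to a perfect square $p^2,q^2,r^2$ that we may take nonzero. (One must discard the finitely many parameter values giving $k=0$ or a degenerate square, exactly as in the remark at the end of Theorem~\ref{thm2sec3}.)

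Next I would address the non-isomorphism claim. Two curves $C_{k_1}:\;y^2=x^{2n}+k_1$ and $C_{k_2}:\;y^2=x^{2n}+k_2$ are isomorphic over $\overline{\Q}$ only if $k_1/k_2$ is a $(2n)$-th power, the analogue of the condition $k_1/k_2\in\Q^{10}$ used in Corollary~\ref{cor1sec3}; for a careful statement one checks that an isomorphism of such superelliptic curves must act as $x\mapsto \zeta x$, $y\mapsto \pm y$ with $\zeta^{2n}=k_2/k_1$. So, writing $k=c$ in terms of the data, it suffices to produce infinitely many admissible parameter values $t_1,t_2,\ldots$ such that no ratio $k_{t_i}/k_{t_j}$ is a $(2n)$-th power. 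I would do this inductively, as in Corollary~\ref{cor1sec3}: having chosen $t_1,\ldots,t_m$ giving pairwise non-isomorphic curves, the equation $k_t = k_{t_i} w^{2n}$ (with $k_t$ a fixed polynomial expression in the coordinates coming from the point on $\cal{E}^{e}_{n}$) defines, for each fixed $i$, a curve in the variables $(t,w)$ — or better, pulling back to the elliptic curve $\cal{E}^{e}_{n}$ via $\phi$, a curve covering $\cal{E}^{e}_{n}$ that has genus at least $2$ because of the $2n$-th power with $2n\geq 4$. By Faltings's theorem \cite{Fal} each of these finitely many curves has only finitely many rational points, so only finitely many of the infinitely many rational points of $\cal{E}^{e}_{n}$ can give a curve $C_{k_t}$ isomorphic to one of $C_{k_{t_1}},\ldots,C_{k_{t_m}}$; pick $t_{m+1}$ avoiding this finite set and iterate.

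The main obstacle, as in Corollary~\ref{cor1sec3}, is the genus bookkeeping: one must verify that the auxiliary curves $k_t=k_{t_i}w^{2n}$ (after the birational identification with a cover of $\cal{E}^{e}_{n}$, so that $t$ is replaced by a coordinate on an elliptic curve) really do have genus $\geq 2$, so that Faltings applies — a superelliptic relation $w^{2n}=(\text{rational function on an elliptic curve})$ will have large genus as long as the rational function is not itself a $d$-th power for some $d\mid 2n$ with $d>1$ and has at least a couple of zeros or poles of order coprime to the relevant divisor, which one checks from the explicit form of $k_t$. The remaining steps — clearing denominators to get integers $k$, checking the six points are distinct and yield $y\neq 0$, and excluding the finitely many degenerate $t$ — are routine. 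Since all of this is parallel to the already-written proof of Corollary~\ref{cor1sec3}, the argument can legitimately be compressed to "the same reasoning applies," which is presumably why the statement is offered without a separate detailed proof.
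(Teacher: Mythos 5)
Your proposal is correct and coincides with what the paper intends: the paper proves this corollary only by the remark that the argument of Corollary \ref{cor1sec3} carries over, i.e.\ the six points in arithmetic progression from Theorem \ref{thm2sec3} together with their images under $(x,y)\mapsto(x,-y)$ give the twelve rational points, and the inductive Faltings argument applied to the auxiliary superelliptic curves $k_t=k_{t_i}w^{2n}$ (of genus $\geq 2$ since $2n\geq 4$) yields infinitely many pairwise non-isomorphic curves. One minor wording slip: the isomorphism criterion ``$k_1/k_2$ a $2n$-th power'' must be taken over $\Q$, not over $\overline{\Q}$ (over $\overline{\Q}$ all curves $y^2=x^{2n}+k$ with fixed $n$ are isomorphic), which is exactly how the paper uses the analogous condition $k_1/k_2\in\Q^{10}$ in Corollary \ref{cor1sec3}.
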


Similarly as in the case of odd exponents we can state the following
questions.

\begin{ques}\label{ques4sec3}
Let us fix an integer $n\geq 2$. What is the smallest value
$|k_{2n}|\in\N$, say $M_{2n}$ with such a property that on the curve
$y^2=x^{2n}+k_{2n}$ there are at least three rational points in
arithmetic progression?
\end{ques}

\begin{ques}\label{ques5sec3}
Let us fix an integer $n\geq 2$ and consider the hyperelliptic curve
$H_{2n}:\;y^2=x^{2n}+k_{2n}$ where $k_{2n}$ is constructed with the
use method we have presented in the proof of the above theorem. In
particular, on the curve $H_{2n}$ we have three points in arithmetic
progression, say $P_{i}$ for $i=1,2,3$. Do the divisors
$(P_{i})-(\infty)$ are independent in the jacobian variety
associated with the curve $H_{2n}$?
\end{ques}

\section{Sextic threefold related to five rational points in arithmetic progressions on $y^2=x^3+k$}\label{sec4}

In the section \ref{sec2} we have used very natural reasoning in
order to construct quartic surface which is closely related to the
problem of existence of numbers of AP4 type. A natural question
arises whether we can construct an algebraic variety, say $\cal{T}$,
with such a property that each (nontrivial) rational point on
$\cal{T}$ gives an integer $k$ of AP5 type, so an integer $k$ such
that on the curve $y^2=x^3+k$ we have five rational points in
arithmetic progression. In order to construct demanded hypersurface
we will use similar method as in section \ref{sec2}.

Let $f(x,y)=y^2+ay-(bx^3+cx^2+dx+e)\in\Q[a,b,c,d,e][x]$ and consider
the curve $C:\;f(x,y)=0$. Using now change of coordinates
\begin{equation*}
(x,\;y)=\Big(\frac{X-12c}{36b},\frac{Y-108ab}{216b}\Big)\;
\mbox{with inverse}\; (X,\;Y)=(12(c+3bx), 108b(a+2y)),
\end{equation*}
we can see that the curve $C$ is birationally equivalent to the
curve
\begin{equation*}
E:\;Y^2=X^3-432(c^2-3bd)X+432(27a^2b^2+8c^3-36bcd+108b^2e).
\end{equation*}

Now, let $p,q,r,s,t$ be free parameters and consider the system of
equations
\begin{equation*}
f(-2,p)=f(-1,q)=f(0,r)=f(1,s)=f(2,t)=0.
\end{equation*}
This system has exactly one solution in respect to $a,b,c,d,e$. This
solution belong to the field $\Q(p,q,r,s,t)$ and has the form
\begin{equation*}
a=\frac{A}{6H},\quad b=\frac{B}{6H},\quad c=\frac{C}{6H},\quad
D=\frac{D}{6H},\quad e=\frac{E}{6H},
\end{equation*}
where $A,\ldots,E\in\Z[p,q,r,s,t]$ are homogeneous, $A$ is of degree
two and $B,\ldots,E$ are of degree three. Moreover, we have
$H=p-4q+6r-4s+t$. From this computations we can see that in order to
find an integer $k$ with such a property that on the curve
$y^2=x^3+k$ there are five points in arithmetic progression it is
enough to find rational points on the sextic threefold given by the
equation
\begin{equation*}
\cal{T}:\;C(p,q,r,s,t)^2=3B(p,q,r,s,t)D(p,q,r,s,t),
\end{equation*}
where
\begin{equation*}
\begin{array}{ll}
  B= & (p-3q+3r-s)t^2-(p^2-3q^2+3r^2-s^2)t+(q-3r+3s)p^2+\\
     & -(q^2-3r^2+3s^2)p+2(q-s)(3qr-3r^2-4qs+3rs),\\
     & \\
  C= & -3((t^2+p^2)(q-2r+s)-(t+p)(q^2-2r^2+s^2)+2r(q^2-qr-rs+s^2)), \\
     & \\
  D= & -(p-6q+3r+2s)t^2+(p^2-6q^2+3r^2+2s^2)t+(2q+3r-6s)p^2+\\
     &-(2q^2+3r^2-6s^2)p-8(q-s)(3qr-3r^2-4qs+3rs).
\end{array}
\end{equation*}

We have obvious automorphisms of order two which act on the
$\cal{T}$ given by
\begin{equation*}
(p,q,r,s,t)\mapsto(t,s,r,q,p),\quad (p,q,r,s,t)\mapsto
(-p,-q,-r,-s,-t).
\end{equation*}
By a trivial rational point on the hypersurface $\cal{T}$ we will
understand the point $(p,q,r,s,t)$ which lies on one of the lines
\begin{equation*}
L_{1}:\;p=q=r,s=t,\;L_{2}:\;p=q=s,r=t,\;\ldots,\;L_{10}:\;r=s=t,p=q,
\end{equation*}
or on the hyperplane $H:\;p-4q+6r-4s+t=0$.

We perform small numerical calculations in order to find a
non-trivial rational point on $\cal{T}$. We compute all integer
solutions of the equation which define the hypersurface $\cal{T}$
under assumption $\op{max}\{|p|,|q|,|r|,|s|,|t|\}\leq 10^2$.
Unfortunately in this range all solutions are trivial. This suggest
the following

\begin{ques}\label{ques4.1}
Is the set of non-trivial rational points on the hypersurface
$\cal{T}$ non-empty?
\end{ques}

\bigskip

\vskip 0.5cm

\hskip 4.5cm        Maciej Ulas

 \hskip 4.5cm       Jagiellonian University

 \hskip 4.5cm       Institute of Mathematics

 \hskip 4.5cm       {\L}ojasiewicza 6

 \hskip 4.5cm       30 - 348 Krak\'{o}w, Poland

 \hskip 4.5cm      e-mail:\;{\tt Maciej.Ulas@im.uj.edu.pl}

 \hskip 4.5cm      e-mail:\;{\tt maciej.ulas@gmail.com}

\end{document}